\documentclass{scrartcl}
\usepackage[margin=3cm]{geometry}
\usepackage[utf8]{inputenc}
\usepackage{amsmath, amsthm, amssymb, amsfonts}
\usepackage[T1]{fontenc}
\usepackage{tikz-cd} 
\usepackage[english]{babel} 
\usepackage{url}
\usepackage{amsmath}
\usepackage{tcolorbox}
\usepackage{xcolor}
\usepackage{hyperref}
\theoremstyle{plain}
\newtheorem{thm}{Theorem}

\newtheorem{lemma}[thm]{Lemma}

\theoremstyle{definition}
\newtheorem{rmk}{Remark}

\newtheorem{question}{Question}

\usepackage{upgreek}
\usepackage{mathtools}
\usepackage{filecontents}
\DeclareMathOperator{\br}{Br}
\DeclareMathOperator{\inv}{inv}

\DeclareMathOperator{\spec}{Spec}
\DeclareMathOperator{\rsw}{rsw}
\DeclareMathOperator{\Pic}{Pic}
\DeclareMathOperator{\Ev}{Ev}
\DeclareMathOperator{\im}{im}
\DeclareMathOperator{\h}{H}

\DeclareMathOperator{\fil}{fil}
\newcommand{\numberset}{\mathbb}
\newcommand{\p}{\numberset{P}}
\newcommand{\F}{\numberset{F}}

\newcommand{\Z}{\numberset{Z}}
\newcommand{\Q}{\numberset{Q}}
\newcommand{\R}{\numberset{R}}

\newcommand{\Os}{\mathcal{O}}
\newcommand{\Ad}{\mathbf{A}}

\newcommand{\A}{\mathcal{A}}
\newcommand{\proj}{\text{Proj}}
\newcommand{\et}{\text{ét}}
\usepackage{dsfont}
\usepackage{commath}

\makeatletter

 \title{\LARGE An example of a Brauer--Manin obstruction to weak approximation at a prime with good reduction}
\date{}
\author{Margherita Pagano}

\begin{document}

\maketitle
\begin{abstract}

    \noindent\textbf{Abstract:} Following Bright and Newton, we construct an explicit K$3$ surface over the rational numbers having good reduction at 2, and for which 2 is the only prime at which weak approximation is obstructed.
\end{abstract}
\section{Introduction}

Let $k$ be a number field and $\Ad_k$ be the ring of adèles of $k$, i.e.\ the restricted product of $k_\nu$ for all places $\nu$ of $k$, taken with respect to the rings of integers $\Os_\nu\subseteq k_\nu$. Let $X$ be a smooth, proper, geometrically irreducible variety over $k$. In order to study the rational points on $X$ it is useful to look at the image of $X(k)$ in the set of the adèlic points $X(\Ad_k)$. More precisely, Manin \cite{Manin} has shown that there exists a pairing 
$$\br(X)\times X(\Ad_k)\rightarrow \Q/\Z$$
such that the rational points of $X$ lie in the image of the right kernel of the pairing, denoted by $X(\Ad_k)^{\br}$. If $X(\Ad_k)^{\br}$ is not equal to the whole $X(\Ad_k)$ we say that there is a \emph{Brauer--Manin obstruction to weak approximation} on $X$. 

In this paper we follow the ideas presented in \cite{https://doi.org/10.48550/arxiv.2009.03282} to construct an example of a K$3$ surface $X$ over the rational numbers with a Brauer--Manin obstruction to weak approximation arising from a prime of good ordinary reduction. More precisely, there exist an element $\A\in \br(X)$ and a prime $p$ of good ordinary reduction such that the evaluation map $|\A|\colon X(\Q_p)\rightarrow\br(\Q_p)$ is non-constant.

\vspace{5mm}

Let ${X}\subseteq \p^3_\Q$ be the projective K$3$ surface defined by the equation 
\begin{equation}\label{eq}
    x^3y+y^3z+z^3w+w^3x+xyzw=0.
\end{equation}
\begin{thm}\label{thm}
The class of the quaternion algebra 
$$\A=\left(\frac{z^3+w^2x+xyz}{x^3},-\frac{z}{x}\right)\in \br k(X)$$
defines an element in $\br(X)$. The evaluation map $|\A|\colon {X}(\Q_2)\rightarrow \br(\Q_2)$ is non-constant, and therefore gives an obstruction to weak approximation on $X$. Finally, $X(\Q)$ is not dense in $X(\Q_2)$, with respect to the $2$-adic topology. 
\end{thm}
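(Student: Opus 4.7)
The plan is to dispatch the three assertions in order: (i) $\A \in \br(X)$, (ii) non-constancy of $|\A|$ on $X(\Q_2)$, and (iii) non-density of $X(\Q)$. For (i) I would use purity for the Brauer group on the smooth proper surface $X$: a class in $\br k(X)$ lies in $\br(X)$ iff its residue vanishes at every prime divisor. The symbol entries $f=(z^3+w^2x+xyz)/x^3$ and $g=-z/x$ have zeros and poles only on components of $\{x=0\}\cap X$, $\{z=0\}\cap X$, and $\{z^3+w^2x+xyz=0\}\cap X$. I would compute the tame symbol at each such component, using \eqref{eq} to rewrite the numerator of $f$ modulo the corresponding ideal into a form whose class modulo squares matches $-z/x$ (or becomes a square) in the residue field. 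For example, on $\{x=0\}\cap X$ the equation collapses to $y^3z+z^3w=0$, which restricts the possible components and simplifies the symbol.

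For (ii) I would use that $X$ has good reduction at $2$, so the reduction $\bar X/\F_2$ is a smooth K3 surface. Two smooth $\F_2$- or $\F_4$-points of $\bar X$ lift by Hensel's lemma to $\Q_2$-points $P_1,P_2\in X(\Q_2)$; I would then compute the Hilbert symbols $(f(P_i),g(P_i))_2 \in \br(\Q_2)[2]$ directly and arrange the lifts so that the two values disagree. Local constancy of the evaluation map then yields non-constancy of $|\A|$ on $X(\Q_2)$.

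For (iii) I would argue by global reciprocity: for $P\in X(\Q)$, $\sum_v \inv_v \A(P)=0$. If I can show $\inv_v \A(Q)=0$ for every place $v\neq 2$ and every $Q\in X(\Q_v)$, it follows that $\inv_2 \A(P)=0$ for every rational $P$; combined with (ii), a $\Q_2$-point with $\inv_2\A = 1/2$ then has a $2$-adic neighborhood disjoint from $X(\Q)$, proving non-density. Triviality at the archimedean place and at the small bad-reduction primes of $X$ (read off the discriminant of \eqref{eq}) should be a direct check, while at every good prime $p\neq 2$ one invokes the Bright--Newton framework of \cite{https://doi.org/10.48550/arxiv.2009.03282}: at a prime of good ordinary reduction, only very restricted (``special'') classes in $\br(X)$ can produce an obstruction, and $\A$ can be shown not to be of that form for $p\neq 2$.

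The main obstacle is (iii): enumerating the bad primes of $X$ and running the filtration / refined Swan conductor analysis at every good prime $p\neq 2$ in order to certify triviality of the evaluation there. Steps (i) and (ii) reduce to residue and Hilbert-symbol calculations which, though tedious, are essentially mechanical once suitable local coordinates are fixed; the structural input at $p=2$ (where the obstruction actually occurs) is what makes $2$ distinguished among the primes.
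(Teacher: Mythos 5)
Your outline for parts (i) and (ii) is essentially the paper's argument: purity plus residue computations at the five divisors supported on $\{x=0\}\cup\{z=0\}\cup\{f=0\}$, and then two explicit $\Z_2$-points produced by Hensel's lemma whose Hilbert symbols differ (in the paper both points reduce to $(1:0:1:0)$ mod $2$ and are distinguished by $f\equiv 1$ versus $f\equiv 7 \bmod 8$). One small slip: an $\F_4$-point of the reduction lifts to a point over the unramified quadratic extension of $\Q_2$, not to a $\Q_2$-point; but $\F_2$-points suffice, so this is harmless.

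The genuine problem is in part (iii), where you have the division of labour backwards. At a good prime $p\neq 2$ you should not invoke the Bright--Newton filtration or refined Swan conductor at all: that machinery governs the $p$-primary part of the Brauer group at residue characteristic $p$, whereas $\A$ has order $2$, which is prime to $p$. The correct (and elementary) input is the result of Colliot-Th\'el\`ene and Skorobogatov, cited in the paper, that at a prime of good reduction the evaluation map of a Brauer class of order prime to $p$ is constant; evaluating at $(1:0:1:0)$ then gives triviality. So what you call ``the main obstacle'' is routine once the right statement is used --- and indeed this is precisely why only $p=2$ can be involved for a $2$-torsion class on a K$3$ over $\Q$. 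Conversely, what you dismiss as ``a direct check'' at the bad primes is where an actual argument is required: the bad primes are $3,5,17$, and constancy of $|\A|$ on all of $X(\Q_p)$ is not a finite verification as stated. The paper argues that the unique singular point of the special fibre does not lift modulo $p^2$, so every $\Z_p$-point of the model factors through its smooth locus $\mathcal{V}$; a residue computation at the special fibre shows $\A$ extends to $\br(\mathcal{V})$, whence the evaluation factors through $\br(\Z_p)=0$. The archimedean place likewise needs a short sign analysis rather than a remark. With these two repairs your plan coincides with the paper's proof, and the final reciprocity argument for non-density of $X(\Q)$ in $X(\Q_2)$ goes through exactly as you state.
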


This theorem shows, with a concrete example, what was already predicted by Bright and Newton in \cite[Theorem C]{https://doi.org/10.48550/arxiv.2009.03282}. Indeed, they prove that, for a smooth, proper variety $V$ over a number field $L$ such that $\h^0(V,\Omega^2_V)\ne 0$, every prime of good ordinary reduction is involved in a Brauer--Manin obstruction over an extension of the base field. The example provided in this article is optimal, in the following sense: we build an element $\A\in \br(X)[2]$ such that the evaluation map associated to it is non-constant without the need to take an algebraic extension of $\Q$. Moreover, as pointed out in \cite[Remark $11.5$]{https://doi.org/10.48550/arxiv.2009.03282}, when we are dealing with K$3$ surfaces defined over the rational numbers, the only prime with good reduction that can play a role in the obstruction to weak approximation is the prime $2$.  

The following example and, more generally, the result proven by Bright and Newton give a negative answer to the following question, asked by Swinnerton-Dyer \cite[Question 1]{colliotskogoodred}.
\begin{question}\label{questionSD}
Let $k$ be a number field and let $S$ be a finite set of
places of $k$ containing the Archimedean places. Let $\mathcal{V}_S$ be a smooth projective $\Os_S$-scheme with geometrically integral fibres, and let $V/k$ be the generic fibre. Assume that $\Pic(\overline{V})$ is finitely generated and torsion-free. Swinnerton-Dyer asks if there is an open and closed $Z\subseteq \prod_{\nu \in S} V(k_\nu)$ such that 
$$V(\Ad_k)^{\br}=Z\times \prod_{\nu \notin S} V(k_\nu).$$
Roughly speaking, is it true that the Brauer--Manin obstruction involves only the places of bad reduction and the Archimedean places?
\end{question}

Finally, we point out that the element $\A$ defined in Theorem \ref{thm} has to be a transcendental element in $\br(X)$. Indeed, Colliot-Thélène and Skorobogatov proved \cite[Lemma 2.2]{colliotskogoodred} that for every element in the algebraic Brauer group the associated evaluation map at a prime with good reduction has to be constant. 

In general, let $V$ be a variety over a field $k$, $\overline{k}$ an algebraic closure of $k$ and $\overline{V}$ the base change of $V$ to $\overline{k}$, i.e.\ $\overline{V}:=V\times_k \overline{k}$; the algebraic and transcendental Brauer groups of $V$ are defined, respectively, as the kernel and the image of the natural map $\br(V)\rightarrow \br(\overline{V})$. The algebraic Brauer group of $V$ is denoted by $\br_1(V)$.

For curves and surfaces with negative Kodaira dimension we have $\br(V)=\br_1(V)$. Hence, K$3$ surfaces are among the first example of varieties where the transcendental Brauer group is potentially non-trivial. However, this is not always the case: for example in \cite{IerSkoZar} they show that, under certain conditions, the whole Brauer group of a diagonal quartic surface over $\Q$ is algebraic. The first example of a transcendental element in the Brauer group of a K$3$ surface defined over a number field was given by Wittenberg in \cite{Wittenberg}. In particular, Wittenberg constructed a $2$-torsion transcendental element that obstructs weak approximation on the surface. Other examples of $2$-torsion transcendental elements that obstruct weak approximation can be found in \cite{HasVarWAorder2} and \cite{IeronymouOrd2}. In all these articles, the obstruction to weak approximation comes from the fact that the transcendental quaternion algebra has non-constant evaluation at the place at infinity.  With a construction similar to the one used in \cite{HasVarWAorder2}, Hassett and Várilly-Alvarado \cite{HasVarHassePrincipleOrd2} have also built an example of a $2$-torsion element on a K$3$ surface that obstructs the Hasse principle. 

Furthermore, there are examples of transcendental elements of order $3$ on K$3$ surfaces that obstruct the Hasse principle or weak approximation (for example, see \cite{PreuOrd3}, \cite{NewtonOrd3} and \cite{BergVarOrd3}). In all these cases, the evaluation map at the place at infinity has to be trivial, since $\br(\R)$ does not contain elements of order $3$, and the obstruction to weak approximation comes from the evaluation map at the prime $3$, which in every example is a prime of bad reduction for the K$3$ surface taken into account. Therefore, none of the examples mentioned above can be used to give a negative answer to Question \ref{questionSD}. 

\vspace{5mm}

\noindent \textbf{Outline of the paper.} Section $2$ contains the proof of Theorem \ref{thm}. In Section $3$ we show that the K$3$ surface $X$ has good \emph{ordinary} reduction at the prime $2$. Moreover, we explain the ideas behind the construction of the quaternion algebra $\A$ of Theorem \ref{thm} {and why we could expect a priori that it obstructs weak approximation on $X$}. Finally, in Section $4$ we slightly generalise the result, by exhibiting a family of K$3$ surfaces for which there exists a $2$-torsion element in the Brauer group whose evaluation map on $\Q_2$-points is non-constant. 

The computations in Theorem \ref{thm} and \ref{thm2} were done using \emph{Magma} \cite{magma}.

\vspace{5mm}

\noindent \textbf{Acknowledgements.} I am deeply grateful to Martin Bright for introducing me to the topic and for the ideas he shared with me that were very helpful in writing this paper. I thank Francesco Viganò for the useful conversation that helped me in finding the equation defining $X$. I am grateful to the anonymous referees for helpful comments.

\section{Proof of the main theorem}
In the first part of the proof we will show that the element $\A\in \br(k(X))$ lies in $\br(X)$. Next, we will exhibit two points $P_1,P_2\in X(\Q_2)$ such that 
$$\A(P_1)\ne \A(P_2).$$ 
Finally, we will prove that, for every place $\nu$ different from $2$, the evaluation map 
$$|\A|\colon X(\Q_\nu)\rightarrow \br(\Q_\nu)$$
is constant. 
\begin{proof}[Proof of Theorem \ref{thm}]
Let $f:=z^3+w^2x+xyz$ and $C_x$,$C_z$,$C_{f}$ be the closed subsets of $X$ defined by the equations $x=0$, $z=0$ and $f=0$ respectively. The quaternion algebra $\A$ defines an element in $\br(U)$, where $U:=X\setminus (C_x\cup C_z \cup C_{f})$.
The purity theorem for the Brauer group \cite[ Theorem 3.7.2]{SkoTh}, assures us of the existence of the exact sequence 
\begin{equation}\label{purity}
    0 \rightarrow \br(X)[2] \rightarrow \br(U)[2] \xrightarrow{\oplus \partial_{D}} \bigoplus_{D} \h^1(k(D),\Z/2)
\end{equation}
where $D$ ranges over the irreducible divisors of $X$ with support in $X\setminus U$ and $k(D)$ denotes the residue field at the generic point of $D$. 

In order to use the exact sequence (\ref{purity}) we need to understand what the prime divisors of $X$ with support in $X\setminus U=C_x \cup C_z \cup C_{f}$ look like. It is possible to check the following:
\begin{itemize}
    \item $C_x$ has as irreducible components $D_1$ and $D_2$, defined by the equations $\{x=0,z=0\}$ and $\{x=0,y^3+z^2w=0\}$ respectively;
    \item $C_z$ has as irreducible components $D_1$ and $D_3$, where $D_3$ is defined by the equations $\{z=0,x^2y+w^3=0\}$;
    \item $C_{f}$ has as irreducible components $D_1,D_4$ and $D_5$, where $D_4$ and $D_5$ are defined by the equations $\{z^3+xw^2=0,y=0\}$ and $\{y^3z-x^2z^2+y^2w^2=0, x^3+y^2z=0,xyz+z^3+xw^2=0\}$ respectively.
\end{itemize}
Therefore, we can rewrite (\ref{purity}) in the following way:
\begin{equation}\label{purity2}
    0 \rightarrow \br(X)[2] \rightarrow \br(U)[2] \xrightarrow{\oplus \partial_{D_i}} \bigoplus_{i=1}^5 H^1(k(D_i),\Z/2).
\end{equation}
Moreover, we have an explicit description of the residue map on quaternion algebras: for an element $(a,b)\in \br(U)[2]$ we have 
\begin{equation}\label{eqResidueMap}
    \partial_{D_i}(a,b)=\left[(-1)^{\nu_i(a)\nu_i(b)}\frac{a^{\nu_i(b)}}{b^{\nu_i(a)}}\right]\in \frac{k(D_i)^\times}{k(D_i)^{\times2}}\simeq H^1(k(D_i),\Z/2) 
\end{equation}
where $\nu_i$ is the valuation associated to the prime divisor $D_i$. This follows from the definition of the tame symbols in Milnor $K$-theory together with the compatibility of the residue map $\partial_D$ with the tame symbols given by the Galois symbols (see \cite{GilleSzamuely}, Proposition $7.5.1$).

We can proceed with the computation of the residue maps $\partial_{D_i}$ for $i=1,\dots,5$:

\begin{enumerate}
    \item $\nu_1(f)=\nu_1(x)=\nu_1(z)=1.$ Hence,
    $$\partial_{D_1}\left(\frac{f}{x^3},-\frac{z}{x}\right)=\left[\left(-\frac{z}{x}\right)^2\right]=1\in \frac{k(D_1)^\times}{k(D_1)^{\times 2}}.$$
    \item $\nu_2(x)=1$ and $\nu_2(f)=\nu_2(z)=0.$ Hence,
    $$\partial_{D_2}\left(\frac{f}{x^3},-\frac{z}{x}\right)=\left[-\left(\frac{f}{x^3}\right)^{-1}\left(-\frac{z}{x}\right)^3\right]=\left[\frac{z^3}{f}\right]=1\in \frac{k(D_2)^\times}{k(D_2)^{\times 2}}$$
    where the last equality follows from the fact that $x=0$ on $D_2$, thus $f\mid_{D_2}=z^3.$
    \item $\nu_3(x)=\nu_3(f)=0$ and $\nu_3(z)=1$. Hence, 
    $$\partial_{D_3}\left(\frac{f}{x^3},-\frac{z}{x}\right)=\left[\left(\frac{f}{x^3}\right)\right]=\left[ \left(\frac{w}{x}\right)^2\right]=1\in \frac{k(D_3)^\times}{k(D_3)^{\times 2}}$$
    where the last equality follows from the fact that $z=0$ on $D_3$, thus $f\mid_{D_3}=w^2x.$
    \item $\nu_4(x)=\nu_4(z)=0$ and $\nu_4(f)=1$. Hence, 
    $$\partial_{D_4}\left(\frac{f}{x^3},-\frac{z}{x}\right)=\left[\left(-\frac{x}{z}\right)\right]=\left[\left( \frac{z}{w}\right)^2\right]=1\in \frac{k(D_4)^\times}{k(D_4)^{\times 2}}$$
    where the last equality follows from the fact that $z^3+w^2x=0$ on $D_4$, thus $-\frac{x}{z}=\left(\frac{z}{w}\right)^2$.
    \item $\nu_5(x)=\nu_5(z)=0$ and $\nu_5(f)=1$. Hence, 
    $$\partial_{D_5}\left(\frac{f}{x^3},-\frac{z}{x}\right)=\left[\left(-\frac{x}{z}\right)\right]=\left[\left( \frac{y}{x}\right)^2\right]=1\in \frac{k(D_5)^\times}{k(D_5)^{\times 2}}$$
    where the last equality follows from the fact that $x^3+y^2z=0$ on $D_5$, thus $-\frac{x}{z}=\left(\frac{y}{x}\right)^2.$
\end{enumerate}
Therefore, $\partial_{D_i}(\A)=0$ for all $i\in \{1,\dots,5\}$, hence $\A\in \br(X)$.

We now show that the element $\A$ obstructs weak approximation on $X$. Let $\mathcal{X}\subseteq \p^3_\Z$ be the projective scheme defined by the equation 
\begin{equation}\label{eq}
    x^3y+y^3z+z^3w+w^3x+xyzw=0.
\end{equation}
$\mathcal{X}$ is a $\Z$-model for $X$ and has good reduction at the prime $2$.

Let $P_1:=(1:0:1:0)\in \mathcal{X}(\Z_2)$; then $P_1$ is such that $\A(P_1)=(1,-1)$. Therefore $\A(P_1)$ is the trivial class in $\br(\Q_2)$. On the other hand, Hensel's lemma assures us of the existence of a solution $P_2=(1:2:1:d)\in \mathcal{X}(\Z_2)$ whose reduction modulo $8$ is $(1:2:1:2)$. Hence, 
$$\A(P_2)=(f(P_2),-1)\quad \text{ with }\quad f(P_2)\equiv 7 \pmod{8}.$$
Therefore, we get that $\A(P_2)$ defines a non-trivial element in the Brauer group of $\Q_2$
\cite[Theorem 3.1]{ACourseInArithmetic}. The existence of the points $P_1$ and $P_2$ implies that there is a Brauer--Manin obstruction to weak approximation arising from $\A$. Indeed, 
$$X(\Ad_\Q)^{\A}=\Bigg\{(x_p)_p\in X(\Ad_\Q): \sum_p \inv_p \A(x_p)=0 \Bigg\}\subsetneq X(\Ad_\Q).$$

In order to conclude the proof of the theorem we investigate the behaviour of the evaluation map at the other primes and at infinity. For every prime $p$ let $\mathcal{X}_p$ be the base change of $\mathcal{X}$ to $\Z_p$. We distinguish the following cases.

\medskip

\fbox{Case ${p \notin \{3,5,17,\infty\}}$.} In this case, $\mathcal{X}$ has good reduction at $p$. Therefore, we can use \cite[Proposition $2.4$]{colliotskogoodred} to conclude that the evaluation map 
$$|\A|\colon \mathcal{X}(\Z_p)\rightarrow \br(\Q_p)$$
is constant. Moreover, $P=(1:0:1:0)\in \mathcal{X}(\Z_p)$ and 
$$\A(P)=(1,-1)$$
which is trivial in $\br(\Q_p)$; hence the evaluation map is trivial on the whole $\mathcal{X}(\Z_p)=X(\Q_p).$

\medskip

\fbox{Case $p\in \{3,5,17\}$.} Under this assumption, $\mathcal{X}_p/\Z_p$ is not smooth. In these three cases, we want to show that the evaluation map is trivial on $\mathcal{X}(\Z_p)$ by showing that it factors through $\br(\Z_p)$.

The special fibre $Y_p:=\mathcal{X}_p \times_{\Z_p} \spec(\F_p)$ is a non-smooth $\F_p$-scheme. However, $Y_p$ is an irreducible $\F_p$-scheme, with just isolated singularities. The $\Z_p$-points of $\mathcal{X}_p$ are all smooth. In fact, $Y_p$ contains just one singular point defined over $\F_p$ and that does not even lift to a $\Z/p^2 \Z$-point. 
Let $\mathcal{V}$ be the smooth locus of $\mathcal{X}_p$; because of what we have just said we have
$$X(\Q_p)=\mathcal{X}(\Z_p)=\mathcal{V}(\Z_p).$$
Let $V$ be the base change of $\mathcal{V}$ to $\spec(\Q_p)$. The purity theorem on $\mathcal{V}$ \cite[Theorem $3.7.1$]{colliotskogoodred} gives us the exact sequence 
$$\br(\mathcal{V})[2]\rightarrow \br(V)[2]\xrightarrow{\partial_{D_p}}  H^1(k(D_p),\Z/2\Z)$$
where $D_p$ is the divisor associated to the special fibre ($D_p$ is the smooth locus of $Y_p$). We just need to show that $\partial_{D_p}(\A)=0.$
Let $\nu_p$ be the valuation corresponding to the prime divisor $D_p$; then 
$$\nu_p\left(\frac{f}{x^3}\right)=0 \quad \text{ and }\quad \nu_p \left(-\frac{z}{x}\right)=0.$$
Indeed, the point $(1:0:1:0)\in Y_p(\F_p)$ is smooth, hence it lies in $D_p$. Moreover 
$$\frac{f}{x^3}(1:0:1:0)=1 \quad \text{ and } \quad -\frac{z}{x}(1:0:1:0)=-1.$$ Therefore both $\frac{f}{x^3}$ and $-\frac{z}{x}$ do not vanish identically on $D_p$, which implies that
$\partial_{D_p}(\A)=0$. Therefore, $\A$ lies in $\br(\mathcal{V})\subseteq \br(X_p)$ and the evaluation map factors as
\begin{center}
    \begin{tikzcd}
    \mathcal{V}(\Z_p)\arrow[rr,"|\A|"] \arrow[rd] & &\br(\Q_p).\\
    &\br(\Z_p) \arrow[ru]
    \end{tikzcd}
\end{center}
Since $\br(\Z_p)$ is trivial, the evaluation map has to be constant and trivial.

\medskip

\fbox{Case $p=\infty$.} The evaluation map 
$$|\A|\colon X(\R)\rightarrow \br(\R)$$
is constant and equal to $0$.

We will show that it is constant on the dense open subset
$$W:=\{P\in X(\R): x(P),z(P),f(P)\ne 0\}\subseteq X(\R).$$ Then, from the continuity of the evaluation map it will follow that it has to be constant also on the whole of $X(\R)$. Let $P=(\alpha:\beta:\gamma:\delta)\in W$, thus $\gamma \neq 0$. First, assume that $-\frac{\gamma}{\alpha}>0$. Then 
$$\A(P)=\left(\frac{f(P)}{x(P)^3},-\frac{z(P)}{x(P)}\right)=\left(\frac{f(P)}{\alpha^3},-\frac{\gamma}{\alpha}\right)$$
is trivial in $\br(\R)$. Now, suppose that $-\frac{\gamma}{\alpha}<0$. Without loss of generality, we can assume that both $\alpha$ and $\gamma$ are positive. We want to show that in this case $f(P)$ has to be positive: 
\begin{itemize}
    \item if $\delta=0$, then $P\in X(\R)$ implies that  $\beta(\alpha^3+\beta^2\gamma)=0.$ Therefore $\beta=0$, since $\alpha^3+\beta^2\gamma\geq \alpha^3>0.$ Hence, $f(P)=\gamma^3>0$;
    \item if $\delta \ne 0$ then $P\in X(\R)$ implies 
    $$f(P)=-\frac{\beta}{\delta} (\alpha^3+\beta^2\gamma).$$
    Hence, since $\alpha^3+\beta^2\gamma>0$, 
    $$f(P)>0 \quad \text{ if and only if }\quad -\frac{\beta}{\delta}>0.$$ Equivalently, $\beta,\delta$ do not have the same sign. Hence, we just need to show that there is no point $P\in W$ with $\alpha,\gamma$ positive and $\beta,\delta$ with the same sign. First, we observe that $\beta,\delta$ cannot both be positive, since otherwise
    $$\alpha^3\beta+\beta^3\gamma+\gamma^3\delta+\delta^3\alpha+\alpha \beta \gamma \delta >0.$$
    On the other hand $\beta,\delta$ cannot both be negative. Indeed, we have that $P\in X(\R)$ if and only if
    $$\alpha^3(-\beta)+(-\beta)^3\gamma+\gamma^3(-\delta)+(-\delta)^3\alpha=\alpha(-\beta)\gamma(-\delta).$$
    Without loss of generality we may assume that $\alpha\geq \max\{-\beta,\gamma,-\delta\}$; but if $\alpha,-\beta,\gamma,-\delta$ are all positive, then 
    $$\alpha^3(-\beta)+(-\beta)^3\gamma+\gamma^3(-\delta)+(-\delta)^3\alpha>\alpha(-\beta)\gamma(-\delta).$$
    Hence, $(\alpha:\beta:\gamma:\delta)\notin X(\R)$.\qedhere
\end{itemize}

\end{proof}

\section{Construction of the quaternion algebra $\A$}
The aim of this section is to give a glance at the ideas behind the construction of the quaternion algebra $\A$. 

Through this section we will indicate by $\mathcal{X}_2$ the base change of the $\Z$-model $\mathcal{X}$ to $\Z_2$, with $X_2$ the base change of $\mathcal{X}_2$ to $\Q_2$ and with ${Y}$ the reduction of $\mathcal{X}_2$ at the prime $2$.

In the previous section we already mentioned that the K$3$ surface $X$ has good reduction at the prime $2$ (i.e. $\mathcal{X}_2$ is a smooth $\Z_2$-scheme); actually $X$ has good \emph{ordinary} reduction at the prime $2$, as we will show in the following lemma.
\begin{lemma}\label{lemmaYordinary}
$Y$ is an ordinary K$3$ surface over $\F_2$.
\end{lemma}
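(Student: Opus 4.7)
The plan is to reduce the claim to a single polynomial-coefficient computation via the standard criterion that a K$3$ surface $Y$ over a perfect field of characteristic $p$ is ordinary if and only if the absolute Frobenius acts as an isomorphism on the $1$-dimensional space $H^2(Y,\mathcal{O}_Y)$, equivalently, if and only if the Hasse invariant of $Y$ is non-zero. This characterisation comes out of the Hodge-Newton inequality: since $h^{0,2}(Y)=1$, the slope-$0$ part of the Newton polygon of $H^2_{\mathrm{cris}}(Y/W)$ has dimension at most $1$, so it equals $1$ (the ordinary case) exactly when Frobenius is non-zero on $H^2(\mathcal{O}_Y)$.

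The next step is to make the Hasse invariant explicit for a smooth quartic $Y\subset\p^3$ with defining polynomial $F$. Using the short exact sequence $0\to\mathcal{O}_{\p^3}(-4)\to\mathcal{O}_{\p^3}\to\mathcal{O}_Y\to 0$ and Čech cohomology for the standard affine cover $\{D(x_i)\}$, one identifies $H^2(Y,\mathcal{O}_Y)$ with the line in $H^3(\p^3,\mathcal{O}(-4))$ generated by $[1/(xyzw)]$, and the absolute Frobenius acts on it by multiplication by the coefficient of $(xyzw)^{p-1}$ in $F^{p-1}$.

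For the surface in question at $p=2$ one has $F^{p-1}=F$, so the computation reduces to reading off the coefficient of $xyzw$ in $F=x^3y+y^3z+z^3w+w^3x+xyzw$. This coefficient equals $1\neq 0$, hence the Hasse invariant is non-zero and $Y$ is ordinary.

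The main delicate point I foresee is citing the Hasse-invariant formula with the correct normalisation. A safer but more laborious backup is to count $|Y(\F_2)|$ by enumerating the $15$ points of $\p^3(\F_2)$: one obtains $|Y(\F_2)|=10$, so $\mathrm{Tr}(\mathrm{Frob}\mid H^2_{\et}(\overline{Y},\Q_\ell))=10-1-4=5$ is odd, and combined with $h^{0,2}=1$ this again forces the slope-$0$ multiplicity to be exactly $1$, hence ordinarity.
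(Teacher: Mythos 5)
Your proposal is correct, but your primary argument takes a genuinely different route from the paper. The paper simply invokes Taelman's criterion that a K$3$ surface over $\F_2$ is ordinary if and only if $|Y(\F_2)|$ is even, and then counts $|Y(\F_2)|=10$. You instead compute the Hasse invariant directly: identifying $H^2(Y,\mathcal{O}_Y)$ with the line in $H^3(\p^3,\mathcal{O}(-4))$ spanned by the \v{C}ech class of $1/(xyzw)$, the ($p$-semilinear) Frobenius acts through the coefficient of $(xyzw)^{p-1}$ in $F^{p-1}$, which for $p=2$ is the coefficient of $xyzw$ in $F$ itself, namely $1\neq 0$; together with the standard fact that a K$3$ is ordinary precisely when Frobenius is non-zero on $H^2(\mathcal{O}_Y)$ (equivalently, the formal Brauer group has height $1$, forced here by the Mazur inequality since $h^{0,2}=1$), this settles the claim. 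This route buys a one-line, purely algebraic coefficient check with no point counting, at the price of having to quote (or verify) the correct normalisation of the Frobenius action under the \v{C}ech identification — exactly the delicate point you flag, and it is a standard but citable computation (in the style of the classical Hasse invariant for plane cubics, or Fedder-type criteria for Calabi--Yau hypersurfaces). Your backup argument coincides with the paper's proof in substance ($|Y(\F_2)|=10$), but is slightly more self-contained: instead of citing the parity criterion you re-derive it from the Lefschetz trace formula, $\mathrm{Tr}(\mathrm{Frob}\mid H^2_{\text{\'et}}(\overline{Y},\Q_\ell))=10-1-4=5$ being a $2$-adic unit forces a slope-$0$ eigenvalue, and Newton-above-Hodge with $h^{0,2}=1$ caps the slope-$0$ multiplicity at $1$, which for a K$3$ is exactly ordinarity. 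Both routes are sound; the paper's is shorter given the reference, yours makes the ordinarity visible directly from the defining equation (the presence of the monomial $xyzw$), which is in fact a pleasant explanation of why this particular quartic was a natural candidate.
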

\begin{proof}

It is enough to show that the cardinality of $Y(\F_2)$ is even (see \cite{taelman2018ordinary}). $Y$ is the projective $\F_2$-variety defined by the equation 
$$x^3y+y^3z+z^3w+w^3x+xyzw=0.$$
Therefore, it is possible to compute $|Y(\F_2)|$ directly, which turns out to be equal to $10$. 
\end{proof}
Before proceeding with the actual construction, we mention a property of the K$3$ surface $Y$, related to the fact that it is ordinary. For every $q\geq 0$, let $\Omega^q_{Y/\F_2}$ be the sheaf of differential $q$-forms on $Y$. We define 
$$Z_{Y/\F_2}^q:=\ker\Big(d:\Omega^q_{Y/\F_2}\rightarrow \Omega^{q+1}_{Y/\F_2}\Big) \quad \text{and} \quad B_{Y/\F_2}^q:=\im\Big(d:\Omega^{q-1}_{Y/\F_2}\rightarrow \Omega^q_{Y/\F_2}\Big).$$
Let 
$$\text{C}_{Y/\F_p}\colon Z_{Y/\F_2}^{\bullet} \rightarrow \Omega^{\bullet}_{Y/\F_2}$$ be the Cartier operator on $Y$ (for the construction of the Cartier operator see \cite[\textsection $2$]{Illusie}). For every $q$ the sheaf of logarithmic differential $q$-forms $\Omega_{Y/\F_2,\log}^q$ is defined as the kernel of the morphism 
$$1-\text{C}_{Y/\F_2}\colon Z_{Y/\F_2}^q\rightarrow \Omega^q_{Y/\F_2}.$$
Hence, $\Omega^q_{Y/\F_2,\log}$ fits in the following exact sequence:
\begin{equation}\label{eqSeqLog}
    0\rightarrow \Omega^q_{Y/\F_2,\log} \rightarrow Z_{Y/\F_2}^q\xrightarrow{1-C_{Y/\F_2}} \Omega^q_{Y/\F_2}.
\end{equation}

\begin{rmk}
The Cartier operator is defined, more generally, for smooth $S$-schemes $X$ in characteristic $p$. The Cartier operator $\text{C}_{X/S}$ goes from $Z_{X/S}^{\bullet}$ to $\Omega_{X^{(p)}/S}^{\bullet}$, where $X^{(p)}$ is the base change of $X$ by the Frobenius morphism $F_S:S\rightarrow S$. In this setting $\Omega^q_{X/S,\log}$ is defined as the kernel of the morphism 
$$W^*-\text{C}_{X/S}\colon Z^q_{X/S}\rightarrow \Omega^q_{X^{(p)}/S}$$
where $W^*$ is the map induced on the differential forms by the natural projection $$W\colon X^{(p)}\rightarrow X.$$ However, since in our case $S=\F_2$, we have $Y^{(2)}=Y$ and $W=\text{id}$.
\end{rmk}
The sheaf $\Omega^q_{Y/\F_2,\log}$ is a sheaf of $\F_2$-vector spaces on the Zariski site of $Y$. Moreover, if we look at it on $Y_\et$, then its formation is compatible with étale base change \cite[2.1.8]{Illusie}.

Let $k$ be an algebraic closure of $\F_2$ and $\overline{Y}$ the base change of $Y$ to $k$. Bloch and Kato proved \cite[Proposition 7.3]{BlochKatoEtale} that $Y$ ordinary implies that the natural map
$$\h^0\Big(\overline{Y},\Omega^2_{\overline{Y}/k,\log}\Big)\otimes_{\F_2} k\rightarrow \h^0\Big(\overline{Y},\Omega^2_{\overline{Y}/k}\Big)$$
is an isomorphism. For K$3$ surfaces, $\h^0\left(\overline{Y},\Omega^2_{\overline{Y}/k}\right)$ is a one-dimensional $k$-vector space. Hence $\h^0\left(\overline{Y},\Omega^2_{\overline{Y}/k,\log}\right)$ has to be a one-dimensional $\F_2$-vector space. Let $\omega$ be the only non-trivial element in $\h^0\Big(\overline{Y},\Omega^2_{\overline{Y}/k,\log}\Big)$. Since $C_{\overline{Y}/k}$ respects the Galois action, we get that the element $\omega$ is Galois fixed, hence it comes from $\h^0\Big(Y,\Omega^2_{Y/\F_2}\Big)$. Therefore, the unique non-trivial element of $\h^0\Big(Y,\Omega^2_{Y/\F_2}\Big)$ must be logarithmic. 
\begin{lemma}\label{lemmaLogF}
Let $F$ be the function field of $Y$. The image of $\omega\in \h^0\Big(Y,\Omega^2_{Y/\F_2}\Big)$ in $\Omega^2_{F/\F_2}$ can be written as 
$$\frac{d\eta_1}{\eta_1}\wedge \frac{d\eta_2}{\eta_2}, \quad \text{where}\quad \eta_1=\frac{z^3+w^2x+xyz}{x^3} \quad \text{and} \quad \eta_2=\frac{z}{x}.$$
\end{lemma}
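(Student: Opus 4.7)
The plan is to identify $\omega$ explicitly via the Poincaré residue on an affine chart, and then match it with the claimed logarithmic form by a short computation in characteristic $2$, where two partial derivatives simplify in exactly the right way.

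First I would recall that, for a smooth quartic in $\p^3$, adjunction gives $\Omega^2_{Y/\F_2} \simeq \Os_Y$, and the canonical generator is the Poincaré residue of $\Omega_{\p^3}/g$ along $Y$, where $g = x^3y + y^3z + z^3w + w^3x + xyzw$ is the defining polynomial. On the affine chart $U := \{x = 1\} \cap Y$, writing $\bar g(y,z,w) = y + y^3z + z^3w + w^3 + yzw$ for the dehomogenization, the residue formula gives
\[
\omega\big|_U \;=\; \frac{dy \wedge dz}{\,\partial \bar g/\partial w\,}.
\]
Since $\Omega^2_{F/\F_2}$ is a one-dimensional $F$-vector space and $U$ is dense in $Y$, the image of $\omega$ in $\Omega^2_{F/\F_2}$ is completely determined by its restriction to $U$.

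The crucial observation is the characteristic-$2$ simplification
\[
\frac{\partial g}{\partial w} \;=\; z^3 + 3w^2 x + xyz \;=\; z^3 + w^2 x + xyz \;=\; f,
\]
which uses $3 = 1$ and $2 = 0$ in $\F_2$. Hence $\omega|_U = (dy \wedge dz)/(f|_{x=1})$. It then remains to compute $d\eta_1/\eta_1 \wedge d\eta_2/\eta_2$ on $U$, where $\eta_1|_U = f|_{x=1} = z^3 + w^2 + yz$ and $\eta_2|_U = z$. A direct differentiation --- again benefiting from $2 = 0$, which kills the $2w\,dw$ term --- gives $d\eta_1 = z\, dy + (z^2 + y)\, dz$ and $d\eta_2 = dz$, so that $d\eta_1 \wedge d\eta_2 = z\, dy \wedge dz$. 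Dividing by $\eta_1 \eta_2 = fz$ recovers $(dy \wedge dz)/f = \omega|_U$, as required.

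The only delicate step is the initial identification of $\omega$ with the Poincaré residue (for which one must be comfortable with adjunction on a hypersurface in characteristic $p$); after that, everything reduces to two short characteristic-$2$ computations. The striking feature --- and the a posteriori justification for the shape of the quaternion algebra $\A$ in Theorem~\ref{thm} --- is that the functions $\eta_1 = f/x^3$ and $\eta_2 = z/x$ appearing in $\A$ are precisely a pair of ``logarithmic coordinates'' realising the unique nontrivial element of $\h^0(Y, \Omega^2_{Y/\F_2,\log})$.
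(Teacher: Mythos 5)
Your proposal is correct and follows essentially the same route as the paper: the paper's local generators $\omega_{p,q}$ are exactly the Poincaré residue forms you write down (your chart computation reproduces the paper's $\omega_{0,3}$ with $G_w=f$), and the concluding characteristic-$2$ manipulation showing $\frac{d\eta_1}{\eta_1}\wedge\frac{d\eta_2}{\eta_2}=\frac{dy\wedge dz}{f}$ on the chart $x=1$ is the same as the paper's computation at the generic point. The only cosmetic difference is that you invoke adjunction to see that the residue form is the global generator, where the paper checks directly that the $\omega_{p,q}$ glue; both are valid.
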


\begin{proof}

Let $\xi\in Y$ be the generic point of $Y$ and $\omega_\xi$ the image of $\omega$ in $\Omega^2_{F/\F_2}$ under the inclusion 
$$\h^0\Big(Y,\Omega^2_{Y/\F_2}\Big)\hookrightarrow \Omega^2_{F/\F_2}.$$
{For convenience, to give an explicit description of a non-zero element in $\h^0\Big(Y,\Omega^2_{Y/\F_2}\Big),$ we will use the following notation: instead of the variables $\{x,y,z,w\}$, for the equation defining $Y$ we will use the variables $\{x_0,x_1,x_2,x_3\}$ and we denote by $G(x_0,x_1,x_2,x_3)$ the polynomial defining $Y$.}

 For every permutation $\{p,q,i,j\}$ of $\{0,1,2,3\}$ we define $W_{p,q}\subseteq Y$
 as the open subset of $Y$ where $x_p\cdot  \frac{\partial G}{\partial x_q}$ does not vanish. Moreover, we set
$$\omega_{p,q}:=\frac{d\left(\frac{x_i}{x_p}\right)\wedge d\left(\frac{x_j}{x_p}\right)}{\frac{1}{x_p^3}\cdot \frac{\partial G}{\partial x_q}}\in \h^0(W_{p,q},\Omega^2_{Y/\F_2}).$$
Since $Y$ is smooth, the open sets $\{W_{p,q}\}$ cover it. It is easy to check that, since we are working over the field $\F_2$, for every $(p,q)\ne (p',q')$ 
$$\omega_{p,q}\mid_{W_{p,q}\cap W_{p',q'}} = \omega_{p',q'}\mid_{W_{p,q}\cap W_{p',q'}}.$$
Therefore, there exists $\omega\in \h^0\Big(Y,\Omega^2_{Y/\F_2}\Big)$ such that for every $p,q$ as above
$$\omega\mid_{W_{p,q}}=\omega_{p,q}\in \h^0\Big(W_{p,q},\Omega^2_{Y/\F_2}\Big).$$
{Now, going back to the usual notation with which we denote our variables by $\{x,y,z,w\},$ let $G_w$ be the partial derivative of the polynomial defining $Y$ with respect to the variable $w$.}
We have
$$\omega_\xi=(\omega_{0,3})_\xi=\frac{d\left(\frac{y}{x}\right)\wedge d\left(\frac{z}{x}\right)}{\frac{G_w}{x^3}}=\frac{d\left(\frac{G_w}{x^3}\right)\wedge d\left(\frac{z}{x}\right)}{\frac{G_w}{x^3}\cdot \frac{z}{x}}.$$
Indeed, 
\begin{align*}
    &d\Bigg(\frac{G_w}{x^3}\Bigg)\wedge d\Bigg(\frac{z}{x}\Bigg)=d\Bigg(\frac{z^3+w^2x+xyz}{x^3}\Bigg) \wedge d\Bigg(\frac{z}{x}\Bigg)=d\Bigg(\frac{z^3}{x^3}+\frac{w^2}{x^2}+\frac{yz}{x^2}\Bigg)\wedge d\Bigg(\frac{z}{x}\Bigg)
\end{align*}
Using that $d\left(\frac{z}{x}\right)\wedge d\left(\frac{z}{x}\right)=0$, together with the fact that we are working over a field of characteristic $2$, we get 
$$d\Bigg(\frac{G_w}{x^3}\Bigg)\wedge d\Bigg(\frac{z}{x}\Bigg)=\frac{z}{x}\cdot d\Bigg(\frac{y}{x}\Bigg) \wedge d\Bigg( \frac{z}{x}\Bigg).$$
\end{proof} 
We can finally proceed with the construction of the quaternion algebra $\A$ and explain why we could expect a priori that $\A$ gives an obstruction to weak approximation on $X$. 

Let $R$ be the henselisation of the discrete valuation ring $\Os_{\mathcal{X}_2,Y}$ and $K^h$ be the fraction field of $R$. Bloch and Kato \cite{BlochKatoEtale} introduced a decreasing filtration 
$$\Big\{U^m\h^2\big(K^h,\mu_2^{\otimes 2}\big)\Big\}_{m\geq 0}$$ 
on $\h^2\big(K^h,\mu_2^{\otimes 2}\big)$ as follows. For $a_1,a_2\in K^h$, let $(a_1,a_2)_2$ denote the class $$\delta(a_1)\cup \delta(a_2)\in \h^2\big(K^h,\mu_2^{\otimes 2}\big)$$
where $\delta\colon \big(K^h\big)^\times \rightarrow \h^1\big(K^h,\mu_2\big)$ is the connecting map coming from the Kummer sequence. Let $U^0 \text{H}^2\big(K^h,\mu_2^{\otimes 2}\big):=\h^2\big(K^h,\mu_2^{\otimes 2}\big)$; for $m>0$, let $U^m \h^2\big(K^h,\mu_2^{\otimes 2}\big)$ be the subgroup of $\h^2\big(K^h,\mu_2^{\otimes 2}\big)$ generated by the elements of the form $\big(1+a_1 2^m,a_2\big)_2$, with $a_1\in R$ and $a_2\in \big(K^h\big)^\times$. Moreover, for every $m\geq 0$ let 
$$\text{gr}^m:= \frac{U^m\h^2\big(K^h,\mu_2^{\otimes2}\big)}{U^{m+1}\h^2\big(K^h,\mu_2^{\otimes 2}\big)}.$$

In \cite[\textsection $5$]{BlochKatoEtale}, Bloch and Kato proved that the map
\begin{align*}
    \rho_0\colon \Omega^2_{F,\log} \oplus \Omega^1_{F,\log} &\rightarrow \text{gr}^0:= \frac{\h^2\big(K^h,\mu_2^{\otimes2}\big)}{U^1\big(\h^2\big(K^h,\mu_2^{\otimes 2}\big)\big)}\\
    \left(\frac{d\eta_1}{\eta_1}\wedge \frac{d\eta_2}{\eta_2},0\right)&\mapsto ({\tilde{\eta}}_1,{\tilde{\eta}}_2)_2\\
    \left(0,\frac{d\eta_1}{\eta_1}\right) &\mapsto (\tilde{\eta}_1,2)_2
\end{align*}
is an isomorphism, where $\tilde{\eta}_1,\tilde{\eta}_2$ are arbitrary lifts of $\eta_1,\eta_2$ to $K^h$. 
\begin{rmk}
We are working over $\Q_2$, which contains a primitive second root of unity, hence we have an isomorphism \cite[Proposition $4.7.1$]{GilleSzamuely}
$$\h^2\big(K^h, \mu_2^{\otimes 2}\big)\simeq  \br\big(K^h\big)[2]$$
which sends $(a,b)_2$ to the class of the quaternion algebra $(a,b)$.
\end{rmk}
Let $\mathcal{B}\in \br(X_2)[2]$; we can always look at $\mathcal{B}$ as an element in $\text{H}^2\big(K^h,\mu_2^{\otimes 2}\big)$. Hence, there exists an $m\geq 0$ such that $\mathcal{B}$ gives a non-zero element in $ \text{gr}^m$. Bright and Newton proved \cite{https://doi.org/10.48550/arxiv.2009.03282} that knowing such an $m$ gives information about the behaviour of the evaluation map 
\begin{align*}
    |\mathcal{B}|\colon\mathcal{X}_2(\Os_L)&\rightarrow \br(L)\\
    P &\mapsto \mathcal{B}(P)
\end{align*} for finite field extensions $L/\Q_2$. We will make this sentence more precise in the following subsection. 
\subsection{The evaluation filtration}\label{subsectionEvFiltration}

Bright and Newton define an evaluation filtration on the Brauer group of $X_2$ in the following way. Given a finite field extension $L$ of $\Q_2$, let $e_{L/\Q_2}$ be the ramification index of $L$, $\Os_L$ its ring of integers and $\pi$ a uniformiser. For all positive integers $r$ and $P\in \mathcal{X}_2$, let $B(P,r)$ be the set of points $Q\in \mathcal{X}_2(\Os_L)$ such that $Q$ has the same image as $P$ in $\mathcal{X}_2(\Os_L/\pi^r)$ (equivalently, we will say that $Q \equiv P \pmod{\pi^r}$). We define
 \begin{alignat*}{2}
     \Ev_n \br X_2 :=\{ \mathcal{B} \in \br(X_2) &\mid \forall L/\Q_2 \text{ finite, } \forall P\in \mathcal{X}_2(\Os_L)\\
     &|\mathcal{B}| \text{ is constant on }B(P,e_{L/\Q_2} n+1)\}, \qquad (n\geq 0)\\ 
     \Ev_{-1} \br X_2 :=\{ \mathcal{B} \in \br(X_2&) \mid \forall L/\Q_2 \text{ finite, } |\mathcal{B}| \text{ is constant on }\mathcal{X}_2(\Os_L)\}\\
     \Ev_{-2} \br X_2 :=\{ \mathcal{B} \in  \br(X_2&) \mid \forall  L/\Q_2  \text{ finite, } |\mathcal{B}| \text{ is zero on }\mathcal{X}_2(\Os_L)\}
 \end{alignat*}

In order to compare the evaluation filtration with the filtration $U^m\h^2\big(K^h,\mu_2^{\otimes 2}\big)$, Bright and Newton used the filtration $\{\fil_n \br(X_2)[2]\}$ on $\br(X_2)[2]$ given by Kato's Swan conductor \cite[\textsection $2$]{https://doi.org/10.48550/arxiv.2009.03282}. Briefly, using the Swan conductor, Kato \cite{Kato} defined an increasing filtration on $\h^2\big(K^h,\Z/2\Z(1)\big)$. From the Kummer sequence, $\h^2\big(K^h,\Z/2\Z(1)\big)$ is isomorphic to $\br\big(K^h\big)[2]$. Therefore, we get a filtration $\{\fil_n\br\big(K^h\big)[2]\}_{n\geq 0}$ on $\br\big(K^h\big)[2]$; the pullback of this filtration gives us a filtration $\{\fil_n \br(X_2)[2]\}_{n\geq 0}$ on $\br(X_2)[2]$.

Moreover, since $\Q_2$ contains a primitive second root of unity, we have that $\Z/2\Z(1)\simeq \mu_2^{\otimes 2}$, thus we can identify 
\begin{equation}\label{eqIso}
    \text{H}^2\big(K^h,\Z/2\Z(1)\big)\simeq \text{H}^2\big(K^h,\mu_2^{\otimes 2}\big).
\end{equation}
 Kato \cite[Lemma $4.3$]{Kato}, proved that in our setting the isomorphism of equation (\ref{eqIso}) induces isomorphisms
\begin{equation}\label{eqIsoFiltration}
    U^m \text{H}^2\big(K^h,\mu_2^{\otimes 2}\big) \simeq \fil_{2-m} \br\big(K^h\big)[2], \qquad 0\leq m\leq 2.
\end{equation}
Moreover, for $m>2$ we have that $U^m\text{H}^2\big(K^h,\mu_2^{\otimes 2}\big)$ vanishes.

In particular,  using the filtration $U^m$ on $\text{H}^2\big(K^h,\mu_2^{\otimes 2}\big)$ we can describe $\fil_0\br(X_2)[2]$ as
$$\Big\{\mathcal{B}\in \br(X_2)[2] \text{ such that the image of }\mathcal{B} \text{ in }\text{H}^2\big(K^h,\mu_2^{\otimes 2}\big) \text{ lies in } U^2 \text{H}^2\big(K^h,\mu_2^{\otimes 2}\big)\Big\}.$$ 
Finally, Bright and Newton proved \cite[Theorem A]{https://doi.org/10.48550/arxiv.2009.03282} that there is an equality
$$\Ev_0 \br(X_2) =\fil_0 \br(X_2).$$
\subsection{The quaternion algebra $\A$}
Summing up, the construction of $\A$ goes through the following steps. By Lemma \ref{lemmaLogF} we know that $\omega\in \h^0\big(Y,\Omega^2_{Y/\F_2}\big)$ is such that
$$\omega_\xi=\frac{d\eta_1}{\eta_1}\wedge \frac{d\eta_2}{\eta_2}\in \Omega^2_{F,\log}$$
is a non-trivial logarithmic $2$-form of $F$, where $\eta_1:=\frac{G_w}{x^3},\eta_2:=\frac{z}{x}$ The isomorphism $\rho_0$ assures us that for every choice of lifts $\tilde{\eta}_1,\tilde{\eta}_2$ in $K^h$, the element $(\tilde{\eta}_1,\tilde{\eta}_2)\in \br\big(K^h\big)[2]$ has non-trivial image in $\text{gr}^0$, that is
$(\tilde{\eta}_1,\tilde{\eta}_2)_2$ lies in ${\text{H}^2\big(K^h,\mu_2^{\otimes 2}\big)}\setminus{U^1 \text{H}^2\big(K^h,\mu_2^{\otimes 2}\big)}.$
At this point, the idea behind the construction of $\A$ is to find lifts $\tilde{\eta}_1,\tilde{\eta}_2$ such that $(\tilde{\eta}_1,\tilde{\eta}_2)$ defines an element in $\br(X)$. Indeed, if such lifts exist, then the image of $(\tilde{\eta}_1,\tilde{\eta}_2)$ in $\br(X_2)$ does not lie in $\fil_0\br(X_2)$. Rather surprisingly, as proven in Theorem \ref{thm}, it turns out that the choice of lifts $$\tilde{\eta}_1:=\frac{z^3+w^2x+xyz}{x^3} \quad \text{and} \quad \tilde{\eta}_2:=-\frac{z}{x}$$ 
defines an element 
$$\mathcal{A}:=(\tilde{\eta}_1,\tilde{\eta}_2)\in \br(X).$$
By construction, using equation (\ref{eqIsoFiltration}), the image of $\A$ in $\br\big(K^h\big)[2]$ is not in $\fil_1 \br\big(K^h\big)[2]$. Hence, if we look at $\A$ in $\br(X_2)$, then it does not lie in $\fil_1 \br(X_2)\supseteq \fil_0\br(X_2)$. In particular, by \cite[Theorem A$(3)$]{https://doi.org/10.48550/arxiv.2009.03282} we have that there exists a finite field extension $L/\Q_2$ and two points $P,Q\in \mathcal{X}_2(\Os_L)$ such that $P$ and $Q$ have the same image in $\mathcal{X}_2(\Os_L/\pi_L)$ and $\A(P)\ne \A(Q).$ We saw in the proof of Theorem \ref{thm} that there exist two points $P_1$ and $P_2$ defined over $\Z_2$, with the same reduction modulo $2$ and whose evaluation map is different. Namely, there is no need to take a field extension of $\Q_2$ in our case. 

\begin{rmk}
From the identification 
$$\br\big(K^h\big)[2]\simeq U^0\big(\h^2\big(K^h,\mu_2^{\otimes 2}\big)\big)\simeq \fil_2 \br\big(K^h\big)[2]$$
we get that 
$$\br(X_2)[2]=\fil_2 \br(X_2)[2].$$
Hence, clearly, $\A\in \fil_2 \br(X_2)[2].$

\cite[Theorem A$(4)$]{https://doi.org/10.48550/arxiv.2009.03282} tells us that 
\begin{equation}\label{eqEv1}
    \Ev_1 \br(X_2)[2]=\{\mathcal{B}\in \fil_2 \br(X_2)[2]\mid \rsw_{2,2}(\A)\in \Omega_F^2\oplus 0\}.
\end{equation}
Hence, if $\A$ is such that $\rsw_{2,2}(\A)=(\alpha,0)$, then $\A$ lies in $\Ev_1 \br(X_2)[2]$. The notation $\rsw_{2,2}$ denotes the \emph{refined Swan conductor} (for the definition of it see \cite[\textsection $5$]{Kato}). In particular, in our case
$$\rsw_{2,2}:\frac{\br\big(K^h\big)[2]}{\fil_1\br\big(K^h\big)[2]}\rightarrow \Omega^2_F\oplus \Omega^1_F.$$
Furthermore, the refined Swan conductor morphism is strictly related to the morphism $\rho_0$. Indeed, by Kato \cite[Lemma 4.3]{Kato}, we know that
$$\frac{\br\big(K^h\big)[2]}{\fil_1\br\big(K^h\big)[2]}\simeq \frac{\br\big(K^h\big)[2]}{U^1\br\big(K^h\big)[2]}=\text{gr}^0$$
and 
\begin{equation}\label{eqrswrho}
    \rsw_{2,2}(\rho_0(\alpha,\beta))=(\alpha,\beta).
\end{equation}
By construction, in our case, the image of $\A$ in $\text{gr}^0$
is of the form 
$$\rho_0\left(\frac{d\eta_1}{\eta_1}\wedge\frac{d\eta_2}{\eta_2},0\right).$$
Therefore $\rsw_{2,2}(\A)$ belongs to $\Omega^2_F\oplus 0$ and
because of equation (\ref{eqEv1}) it holds that $\A$ lies in $\Ev_1 \br(X_2)[2]$. Thus, by the definition of the last object, the evaluation map from $\mathcal{X}_2(\Z_2)$ to $\br(\Q_2)$ depends only on the reduction of the points modulo $4$. 
\end{rmk}
\begin{rmk}\label{rmkalphap0}
Using Theorem B(4) in \cite{https://doi.org/10.48550/arxiv.2009.03282} we could already predict that the evaluation map attached to $\A$ is non-constant on the $2$-adic points. By equation (\ref{eqrswrho}), we have that $$\rsw_{2,2}=(\omega,0).$$ Let $P_0\in Y(\F_2)$ be the point $(1:0:1:0)$; locally in a neighbourhood of $P_0$, $\omega$ is of the form $$\frac{x^3}{G_w}\cdot d \left(\frac{y}{x}\right)\wedge d\left(\frac{z}{x}\right)=\frac{x^3}{G_w}\cdot d \left(\frac{y}{x}\right)\wedge d\left(\frac{z}{x}-1\right)$$
with $G_w=z^3+w^2x+xyz$ (see the proof of Lemma \ref{lemmaLogF}). The functions $\frac{y}{x}$ and $\frac{z}{x}-1$ constitute a system of parameters for the local ring $\Os_{Y,P_0}$. Hence, we have that
$$\omega_{P_0}=\left(\frac{y}{x}\right)\wedge \left(\frac{z}{x}-1\right)\ne 0 \in \Omega^2_{Y,P_0}.$$
By \cite[Theorem B(4)]{https://doi.org/10.48550/arxiv.2009.03282} there exists a point $Q\in \mathcal{X}(\Z_2)$ whose reduction modulo $2$ coincides with $P_0$ and such that the evaluation map attached to $\A$ maps $B(Q,1)$ surjectively to $\br(\Q_2)[2]$.
\end{rmk}
\section{A family of K3 surfaces with the same property}
In this section we will show that the first part of Theorem \ref{thm} can be easily generalised to a family of K$3$ surfaces that share some properties with our K$3$ surface $X$. 

Let $a,b,c,d,e$ be odd integers, $\underline{\alpha}=(a,b,c,d,e)$ and $X_{\underline{\alpha}}$ be the K$3$ surface in $\p^3_\Q$ associated to the equation
\begin{equation}\label{EqGeneral}
    a\cdot x^3 y+ b\cdot y^3 z+ c\cdot z^3 w+ d \cdot w^3 x+ e\cdot xyzw=0.
\end{equation}
    Let $\mathcal{X}_{\underline{\alpha}}$ be the projective scheme over $\Z$ defined by the polynomial equation (\ref{EqGeneral}). Then $\mathcal{X}_{\underline{\alpha}}$ is a $\Z$-model of $X_{\underline{\alpha}}$. Moreover, since $a,b,c,d,e$ are all odd integers, all these varieties have the same reduction, which we will denote by $Y$, modulo the prime $2$. Hence, by Lemma \ref{lemmaYordinary}, all these K$3$ surfaces have ordinary good reduction at the prime $2$. Therefore, as already pointed out in Section $3$, the unique non-trivial element $\omega$ in $\h^0(Y,\Omega^2_Y)$ must be logarithmic. A natural question that arises at this point is if also for all these K$3$ surfaces there exists an element $\A\in \br(X_{\underline{\alpha}})[2]$ such that $$\rho_0(\omega,0)=[\A]\in\frac{\br(K^h)[2]}{U^1\br(K^h)[2]}.$$
    Indeed, by Section $3.2$ this would imply that, at least after taking a finite field extension of $\Q_2$, the quaternion algebra $\A$ gives an obstruction to weak approximation. The following theorem gives a partial answer to the question. 
\begin{thm}\label{thm2}
Assume that $a b c d \in \Q^{\times 2}$. Then, the class of the quaternion algebra
$$\A=\left(d\cdot \frac{c\cdot z^3+d\cdot w^2 x+e\cdot xyz}{x^3},-(cd)\cdot \frac{z}{x}\right)\in \br(\Q(X_{\underline{\alpha}}))$$
defines an element in $\br(X_{\underline{\alpha}})$. The evaluation map $|\A|\colon X_{\underline{\alpha}}(\Q_2)\rightarrow \br(\Q_2) $ is non-constant, and therefore gives an obstruction to weak approximation on $X$.
\end{thm}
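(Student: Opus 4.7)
The strategy follows the two-step pattern of Theorem \ref{thm}: first show $\A \in \br(X_{\underline{\alpha}})$ via the purity sequence \eqref{purity}, then exhibit $P_1, P_2 \in X_{\underline{\alpha}}(\Q_2)$ with $\A(P_1) \ne \A(P_2)$. Set $f_g := c z^3 + d w^2 x + e x y z$, so $\A = (d f_g / x^3,\, -c d \cdot z/x)$. The branch locus $\{x=0\} \cup \{z=0\} \cup \{f_g=0\}$ has an irreducible decomposition on $X_{\underline{\alpha}}$ which is structurally the same as in Theorem \ref{thm}: five components $D_1^g,\dots,D_5^g$ whose defining equations differ from the special case only by the insertion of the coefficients $a,b,c,d$. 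The key identity is obtained by substituting $c z^3 = -d w^2 x - e x y z$ into the equation defining $X_{\underline{\alpha}}$, which produces $y (a x^3 + b y^2 z) = 0$, so that $D_5^g$ satisfies the relation $a x^3 + b y^2 z = 0$.

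Applying the tame symbol formula \eqref{eqResidueMap} component by component, one sees that the scalar factors $d$ and $cd$ are exactly what is needed to absorb the non-square factors of $c$ and $d$ that would otherwise appear as residues: at $D_2^g$, where $f_g$ restricts to $c z^3$, the residue reduces to $(cd)^2$; at $D_4^g$, the relation $c z^3 + d w^2 x = 0$ identifies $-x/(cdz)$ with $(z/(dw))^2$; and the residues at $D_1^g$ and $D_3^g$ are patent squares. The hypothesis $abcd \in \Q^{\times 2}$ is used only at $D_5^g$: there the residue $-x/(cdz)$ simplifies via $a x^3 + b y^2 z = 0$ to $(y/x)^2 \cdot b^2/(abcd)$, which is a square in $k(D_5^g)$ exactly when $abcd$ is a rational square. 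Hence the purity sequence places $\A$ in $\br(X_{\underline{\alpha}})$.

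For the non-constancy at $p=2$, take $P_1 := (1{:}0{:}1{:}0) \in \mathcal{X}_{\underline{\alpha}}(\Z_2)$: since $f_g(P_1) = c$, we get $\A(P_1) = (cd, -cd) = 0$ in $\br(\Q_2)$. To construct $P_2$, parametrise candidates of the form $(1 : 2 : \sigma : 2 w_1)$ with $\sigma \in \{\pm 1\}$ and $w_1 \in \Z_2^\times$; the partial derivative of the surface equation with respect to $w$ reduces to $c\sigma$ mod $2$, so Hensel's lemma lifts the corresponding mod-$2$ point to a genuine $\Z_2$-point. A direct computation using $w_1^2 \equiv 1 \pmod 8$ yields $f_g(P_2) \equiv \sigma c + 2 \sigma e + 4 \pmod 8$, whence $\A(P_2) \equiv \bigl(\sigma(dc + 2de) + 4,\ -\sigma cd\bigr) \pmod{\Q_2^{\times 2}}$. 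Combining $cd \equiv ab \pmod{\Q_2^{\times 2}}$ (from $abcd \in \Q^{\times 2}$) with the standard formula $(u,v)_{\Q_2} = (-1)^{\varepsilon(u)\varepsilon(v)}$ for Hilbert symbols of $2$-adic units, a brief case analysis on $cd \bmod 4$ shows that choosing $\sigma = 1$ when $cd \equiv 1 \pmod 4$ and $\sigma = -1$ when $cd \equiv 3 \pmod 4$ makes both arguments $\equiv 3 \pmod 4$, producing a nontrivial class in $\br(\Q_2)[2]$.

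The main obstacle will be verifying the irreducible decomposition of $\{f_g=0\} \cap X_{\underline{\alpha}}$ and the exact defining equations of the $D_i^g$ for general $(a,b,c,d,e)$: these are presumably the tasks handled by \emph{Magma}. Once the geometry is in place, the residue calculations are completely analogous to those of Theorem \ref{thm}, and the evaluation non-constancy reduces to a finite check modulo $8$.
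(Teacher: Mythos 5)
Your proposal is correct and follows essentially the same route as the paper: the same purity sequence over the five divisors $D_1,\dots,D_5$ with the analogous residue computations (the hypothesis $abcd\in\Q^{\times 2}$ entering only at $D_5$ via $a x^3+b y^2 z=0$), and non-constancy at $2$ via $P_1=(1{:}0{:}1{:}0)$ together with a Hensel-lifted point reducing to it modulo $2$ whose value of $f$ is controlled modulo $8$. The only (minor) difference is the choice of the second point: the paper takes $P_2=(cd:y:1:-2acde/(2c+cd))$ with $y\equiv 2de \pmod 8$, which forces the second slot to be exactly $-1$ and the first $\equiv 7 \pmod 8$ with no case distinction, whereas your points $(1:2:\sigma:2w_1)$ require the small case split on $cd \bmod 4$ — both computations are valid.
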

\begin{proof}
The proof is very similar to the first part of the proof of Theorem \ref{thm}. We denote by $f$ the polynomial $c\cdot z^3+d\cdot w^2 x+e\cdot xyz$. Also in this case, let $C_x,C_z,C_f$ be the closed subsets of $X_{\underline{\alpha}}$ defined by the equations $x=0,z=0$ and $f=0$ respectively. Let $U$ be the open subset of $X$ defined as the complement of $C_x\cup C_z\cup C_f$. Clearly, $\A\in \br(U)$. Moreover, 
\begin{itemize}
    \item $C_x$ consists of two irreducible components, $D_1=\{x=0,z=0\}$ and $D_2=\{x=0,b\cdot y^3+c\cdot z^2 w=0\}$.
    \item $C_z$ consists of two irreducible components, $D_1$ and $D_3=\{z=0,a\cdot x^2y+d \cdot w^3=0\}$.
    \item $C_f$ consists of three irreducible components, $D_1$, $D_4=\{y=0, c\cdot z^3+d\cdot w^2x=0\}$ and $D_5=\{f=0,a\cdot x^3+b\cdot y^2z=0,
        be\cdot y^3z - a c\cdot x^2z^2 + bd\cdot y^2w^2=0\}$.
\end{itemize}
In order to show that the quaternion algebra $\A$ lies in the Brauer group of $X$ we will use the exact sequence (\ref{purity2}) coming from the purity theorem and the explicit description of the residue map given in equation (\ref{eqResidueMap}). We will denote by $\nu_i$ the valuation associated to the prime divisor $D_i$.
\begin{enumerate}
    \item $\nu_1(f)=\nu_1(x)=\nu_1(z)=1$, and so $\nu_1\left(\frac{f}{x^3}\right)=-2$ and $\nu_1\left(-\frac{z}{x}\right)=0$. Hence
    $$\partial_{D_1}(\A)=\left[(-1)^{0}\left(d\cdot \frac{f}{x^3}\right)^{0}\left(-\frac{1}{cd} \cdot \frac{x}{z}\right)^{-2}\right]=\left[\left(\frac{1}{cd} \cdot \frac{z}{x}\right)^2\right]=1\in \frac{k(D_1)^\times}{k(D_1)^{\times 2}}.$$
    \item $\nu_2(f)=\nu_2(z)=0$ and $\nu_2(x)=1$, and so $\nu_2\left(\frac{f}{x^3}\right)=-3$ and $\nu_2\left(-\frac{z}{x}\right)=-1$. Hence
    $$\partial_{D_2}(\A)=\left[(-1)^{3}\left(d \cdot \frac{f}{x^3}\right)^{-1}\left(-\frac{1}{cd}\cdot \frac{x}{z}\right)^{-3}\right]=\left[\left(\frac{(cd)^3}{d}\frac{x^3}{c\cdot z^3}\cdot \frac{z^3}{x^3}\right)\right]=1\in \frac{k(D_2)^\times}{k(D_2)^{\times 2}}$$
    where the second equality follows from the fact that $f\mid_{D_2}=c\cdot z^3$.
    \item $\nu_3(f)=\nu_3(x)=0$ and $\nu_3(z)=1$, and so $\nu_3\left(\frac{f}{x^3}\right)=0$ and $\nu_3\left(-\frac{z}{x}\right)=1$. Hence
    $$\partial_{D_3}(\A)=\left[(-1)^{0}\left(d\cdot \frac{f}{x^3}\right)^{1}\left(-\frac{1}{cd} \cdot \frac{x}{z}\right)^{0}\right]=\left[d\cdot \frac{f}{x^3}\right]=1\in \frac{k(D_3)^{\times}}{k(D_3)^{\times 2}}$$
    where the last equality follows from the fact that $f\mid_{D_3}=d\cdot w^2x$.
    \item $\nu_4(f)=1$ and $\nu_4(x)=\nu_4(z)=0$, and so $\nu_4\left(\frac{f}{x^3}\right)=1$ and $\nu_4\left(-\frac{z}{x}\right)=0$. Hence
    $$\partial_{D_4}(\A)=\left[(-1)^{0}\left(d\cdot \frac{f}{x^3}\right)^{0}\left(-\frac{1}{cd} \cdot \frac{x}{z}\right)^{1}\right]=\left[\frac{1}{cd}\cdot \frac{c}{d}\right]=1\in \frac{k(D_4)^\times}{k(D_4)^{\times 2}}$$
    where the second equality follows from the fact that $-\frac{z}{x}=\frac{d}{c}\left(\frac{w}{z}\right)^2$ on $D_4$.
    \item $\nu_5(f)=1$ and $\nu_5(x)=\nu_5(z)=0$, and so $\nu_5\left(\frac{f}{x^3}\right)=1$ and $\nu_5\left(-\frac{z}{x}\right)=0$. Hence
    $$\partial_{D_5}(\A)=\left[(-1)^{0}\left(d \cdot \frac{f}{x^3}\right)^{0}\left(-\frac{1}{cd} \cdot \frac{x}{z}\right)^{1}\right]=\left[\frac{b}{acd}\right]=1\in \frac{k(D_5)^\times}{k(D_5)^{\times 2}}$$
    where the last equality follows from the fact that $-\frac{z}{x}=\frac{a}{b}\left(\frac{x}{y}\right)^2$ on $D_5$ and the assumption that $abcd$ is a square in $\Q$.
\end{enumerate}
The above computations together with the purity theorem show indeed that $\A$ lies in $\br(X_{\underline{\alpha}})$. Finally, we need to show that the evaluation map on the $\Q_2$-points of $X_{\underline{\alpha}}$ is non-constant. Let
$$P_1:=(1:0:1:0)\in X_{\underline{\alpha}}(\Q).$$ Then, $P_1$ is such that $\A(P_1)=(dc,-dc)$,
which is trivial in $\br(\Q_2)$. Furthermore, let $$P_2:=\left(cd:y:1:-2\cdot \frac{acde}{2c+cd} \right)\in X_{\underline{\alpha}}(\Q_2)$$ be such that the reduction modulo $8$ of $y$ is equal to $2 \cdot de$. Then $$f(P_2)\equiv c+d\cdot 4 \cdot (cd) +e \cdot (cd)\cdot 2\cdot de \equiv 7\cdot c \mod 8 $$
and therefore evaluation of $\A$ at $P_2$ is
$$\A(P_2)=\left(d\cdot \frac{f(P_2)}{(cd)^3},-cd\frac{1}{cd}\right)=(g(P_2),-1) \quad \text{with} \quad g(P_2)\equiv 7 \mod{8}.$$
Thus, $\A(P_2)$ defines a non-trivial element in $\br(\Q_2)$. Hence, the element $\A\in \br(X_{\underline{\alpha}})$ gives an obstruction to weak approximation on $X_{\underline{\alpha}}$. 
\end{proof}
    A natural question that arises at this point is what happens if $\Delta:=abcd$ is not a square in $\Q$. Note that, in this case we can repeat the same computations that we did in the proof of Theorem \ref{thm2}. That is, for every divisor $D\ne D_5$ we get $\partial_D(\A)=1$,
    while for $D_5$ we have
    $$\partial_{D_5}(\A)=[\Delta]\in \frac{k(D_5)^\times}{k(D_5)^{\times 2}}.$$
    Hence, in this case, $\A$ defines an element in the Brauer group of the base change of $X_{\underline{\alpha}}$ to $\Q(\sqrt{\Delta})$. With an argument similar to the one of Remark \ref{rmkalphap0}, also in this case we expect to be able to find two points $P_1,P_2$ defined over $\Q_2(\sqrt{\Delta})$ such that $\A(P_1)\ne \A(P_2)$.
\subsection{Final considerations}
As already mentioned in the introduction, this paper was strongly inspired by the following result proven by Bright and Newton.
\begin{thm}\label{thmC}\emph{\cite[Theorem C]{https://doi.org/10.48550/arxiv.2009.03282}}
Let $V$ be a smooth, proper variety over a number field $L$ with $H^0(V,\Omega^2_V)\ne 0$. Let $\mathfrak{p}$ be a prime of $L$ at which $V$ has good ordinary reduction, with residue characteristic $p$. Then there exists a finite field extension $L'/L$, a prime $\mathfrak{p}'$ of $L'$ lying over $\mathfrak{p}$, and an element $\A\in \br V_{L'} \{p\}$ such that the evaluation map $|\A|:V(L'_{\mathfrak{p}'})\rightarrow \br(L'_{\mathfrak{p}'})$ is non-constant. In particular, if $V(\Ad_{L'})\ne \emptyset$ then $\A$ obstructs weak approximation on $V_{L'}$.
\end{thm}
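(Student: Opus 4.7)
The plan is to emulate, in full generality, the construction that is carried out by hand in Theorem \ref{thm} for the quartic $X$ and explained conceptually in Section 3. The strategy has four stages: (i) produce a non-zero logarithmic 2-form $\omega$ on the special fibre $Y$ using ordinarity; (ii) use the Bloch--Kato symbol $\rho_0$ to package $\omega$ into a candidate local Brauer class in $H^2(K^h,\mu_p^{\otimes 2})/U^1$; (iii) after enlarging the base field to some $L'/L$, lift this local class to a genuine global class $\A\in\br(V_{L'})\{p\}$; (iv) invoke the comparison between Kato's filtration and the evaluation filtration to deduce that $|\A|$ is non-constant on $V(L'_{\mathfrak{p}'})$.

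For step (i), flatness of the integral model together with $H^0(V,\Omega^2_V)\ne0$ gives $H^0(Y,\Omega^2_Y)\ne0$ for the smooth special fibre $Y/\kappa$. Since $Y$ is ordinary, the Bloch--Kato theorem quoted after Lemma \ref{lemmaYordinary} yields the isomorphism $H^0(\overline{Y},\Omega^2_{\overline{Y},\log})\otimes_{\F_p}\overline{\kappa}\simeq H^0(\overline{Y},\Omega^2_{\overline{Y}})$; after replacing $L$ by the unramified extension $L_1$ whose residue field absorbs the Galois action on a generator, one obtains a non-zero logarithmic form $\omega\in H^0(Y_{\kappa_1},\Omega^2_{\log})$. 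Exactly as in Lemma \ref{lemmaLogF}, one writes $\omega=d\eta_1/\eta_1\wedge d\eta_2/\eta_2$ at the generic point of the special fibre, where $\eta_1,\eta_2$ are rational functions on $Y_{\kappa_1}$.

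Step (ii) is essentially formal: for arbitrary lifts $\tilde\eta_i\in K^h$, the symbol $(\tilde\eta_1,\tilde\eta_2)_p$ has non-zero image in $\text{gr}^0$ by the isomorphism $\rho_0$ (or its $p$-analogue). The heart of the argument is step (iii): to choose lifts $\tilde\eta_i$ and a finite extension $L'/L_1$ such that the cyclic/symbol algebra they define on the function field of $V_{L'}$ extends across every horizontal and vertical divisor of a regular model. Concretely, one computes the residues $\partial_D(\tilde\eta_1,\tilde\eta_2)_p$ along each irreducible component $D$ of the support of $\tilde\eta_1\tilde\eta_2$, as in the five calculations following (\ref{eqResidueMap}), and enlarges $L'$ by adjoining suitable $p$-th power roots of the non-square factors that appear, so that each residue becomes trivial in $k(D)^\times/k(D)^{\times p}$. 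Purity for the Brauer group (sequence (\ref{purity})) then certifies that the symbol lies in $\br(V_{L'})\{p\}$, producing the desired $\A$.

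Step (iv) is again formal given the Bright--Newton package: by construction the image of $\A$ in $\text{gr}^0$ is $\rho_0(\omega,0)\ne0$, so by the compatibility $\rsw_{p,p}\circ\rho_0=\mathrm{id}$ sketched in (\ref{eqrswrho}) and the isomorphism (\ref{eqIsoFiltration}), $\A\notin\fil_1\br(V_{L'})\{p\}$, a fortiori $\A\notin\fil_0$. Their Theorem A then yields $\A\notin\Ev_0\br(V_{L'})$, so by definition of $\Ev_0$ there are two points in $V(L'_{\mathfrak{p}'})$ with the same reduction on which $\A$ takes different values, proving non-constancy and hence the Brauer--Manin obstruction to weak approximation. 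The genuine obstacle is step (iii): controlling all horizontal residues of an a priori only locally defined symbol algebra, which is exactly what forces the passage to a finite extension $L'/L$ and explains why this paper's explicit Theorem \ref{thm}, where no extension is needed, is a delicate phenomenon rather than the default.
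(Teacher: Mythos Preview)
The paper does not contain a proof of this statement: Theorem~\ref{thmC} is simply quoted verbatim from Bright and Newton, and the surrounding text in Section~4.1 only explains how the explicit example of Theorem~\ref{thm} realises its conclusion with $L'=\Q$. There is therefore nothing in the paper to compare your proposal against; what you have written is a sketch of how one might try to prove Bright and Newton's result directly.

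That said, your sketch has a real gap in step~(iii). Steps~(i), (ii) and~(iv) are sound and mirror exactly the narrative of Section~3: ordinarity supplies a non-zero logarithmic $2$-form $\omega$, the map $\rho_0$ packages it as a symbol class modulo $U^1$, and the Bright--Newton identification $\Ev_0=\fil_0$ together with $\rsw\circ\rho_0=\mathrm{id}$ forces non-constancy once a global lift exists. The problem is your proposed mechanism for producing that global lift. The residue $\partial_D(\tilde\eta_1,\tilde\eta_2)_p$ along a horizontal divisor $D$ lands in $k(D)^\times/k(D)^{\times p}$, where $k(D)$ is the \emph{function field} of $D$; it is in general represented by a non-constant rational function on $D$, not by a scalar from $L$. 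Adjoining $p$-th roots of elements of $L$ cannot kill such a class. In the paper's explicit computation the five residues vanish because of algebraic coincidences on each $D_i$ (for instance $f|_{D_2}=z^3$, $f|_{D_3}=dw^2x$, $-x/z|_{D_5}=(y/x)^2$), and the one residual constant obstruction $[abcd]$ in Theorem~\ref{thm2} is precisely what the hypothesis $abcd\in\Q^{\times2}$ is there to remove. There is no reason to expect that for a general $V$ any choice of lifts $\tilde\eta_i$ will leave only constant residues.

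Bright and Newton's actual argument does not proceed by lifting a symbol algebra and chasing residues; the element they produce lies only in $\br(V_{L'})\{p\}$, not a priori in $\br(V_{L'})[p]$, and is obtained by an abstract cohomological existence argument rather than by writing down a cyclic algebra and extending scalars until purity is satisfied. Your outline correctly identifies step~(iii) as ``the genuine obstacle'', but the method you propose for overcoming it does not work; this is why the paper singles out the equality $L'=\Q$ in Theorem~\ref{thm} as a delicate and somewhat surprising feature rather than an instance of a general recipe.
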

In our example, 
$$V=X=\proj\left(\frac{\Q[x,y,z,w]}{x^3y+y^3z+z^3w+w^3x+xyzw}\right)\subseteq \p^3_\Q$$
is a smooth projective variety defined over the number field $\Q$. 

Since $X$ is a K$3$ surface, the hypotheses of Theorem \ref{thmC} are satisfied. In this example, we were able to construct an element $\A$ that satisfies Theorem \ref{thmC} which is already defined over the rational numbers and the corresponding evaluation map is non-constant on the $2$-adic points. Moreover, $\A$ does not just lie in the $2$-primary part of the Brauer group of $X$, it has order exactly $2$.

It is still unclear whether one can hope to extend this strategy to a more general setting.
\newpage
\bibliographystyle{plain}
\bibliography{bib.bib}
\textsc{Mathematisch Instituut, Niels Bohrweg 1, 2333 CA Leiden, Netherlands}\\
\textit{Email Address:} \textbf{m.pagano@math.leidenuniv.nl}
\end{document}